\theoremstyle{plain}
\newtheorem{theorem}{Theorem}[section]
\newtheorem{lemma}[theorem]{Lemma}
\theoremstyle{definition}
\theoremstyle{remark}
\newtheorem{remark}{Remark}
\begin{document}

\title[Invisible directions]
      {A remark on the number of invisible directions for a smooth Riemannian metric}

\date{23 April 2013}
\author{Misha Bialy}
\address{School of Mathematical Sciences, Raymond and Beverly Sackler Faculty of Exact Sciences, Tel Aviv University,
Israel} \email{bialy@post.tau.ac.il}
\thanks{Partially supported by ISF grant 128/10}

\subjclass[2010]{ } \keywords{lens rigidity, invisible directions}

\begin{abstract}In this note we give a construction of a smooth Riemannian
metric on $\mathbf{R}^n$ which is standard Euclidean outside a
compact set $K$ and such that it has $N={n(n+1)}/{2}$ invisible
directions, meaning that all geodesics lines passing through the
set $K$ in these directions remain the same straight lines on exit. For example
in the plane our construction gives three invisible directions.
This is in contrast with billiard type obstacles where a very
sophisticated example due to A.Plakhov and V.Roshchina gives $2$
invisible directions in the plane and $3$ in the space.

We use reflection group of the root system $A_n$ in order to make the directions of the
roots invisible.
\end{abstract}

\maketitle

\section{The Problem of invisibility}
\label{sec:intro} Consider a smooth Riemannian metric $g$ on
$\mathbf{R}^n$ which is supposed to be standard Euclidean outside a
compact set $K$. Geodesics of the metric outside the set $K$ are
straight lines and are deformed somehow inside $K$. Following \cite{P}, we
say that the obstacle $K$ is invisible in the direction $v$ if every
geodesic in the direction $v$ passing the obstacle remains the same
straight line. This direction $v$ is called the direction of
invisibility in this case. It is important question how many
invisible direction can exist for a non-flat smooth Riemannian metric. It was shown
in \cite{Croke} basing on \cite{Gromov} and generalizing previous
results \cite{G-G},\cite{Mi} that the invisibility in all
directions implies that the metric is isometric to Euclidean one. This
is the so called lens rigidity phenomena (see also \cite{S-U} for further developments).
It is a natural question to ask how large the set of
invisible direction can be. In particular can it be large or even
infinite. In \cite{P} this question is studied for an analogous model of perfect reflections. A very sophisticated construction of two invisible
directions in the plane and three in the space is given in \cite{P}.
On the other hand there are non-smooth examples of Riemannian metrics with singularities 
which are perfect lenses (see \cite{LP} for further references).

Our remark is that for the smooth case one can construct a Riemannian metric with $N=n(n+1)/2$
invisible directions in $\mathbf{R}^n$.

\begin{theorem}
\label{main} There exists a family of smooth non-flat Riemannian metrics
$g$ on $\mathbf{R}^n$ which are Euclidean outside a compact set $K$
and having $N=n(n+1)/2$ invisible directions.

\end{theorem}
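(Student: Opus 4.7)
The plan is to exploit the reflection group $W$ of the root system $A_n$ as a symmetry of the metric. Realize $\mathbb{R}^n$ as the hyperplane $V=\{x\in\mathbb{R}^{n+1}:\sum_{i=0}^n x_i=0\}$, on which $W=S_{n+1}$ acts by permutation of coordinates. The $N=n(n+1)/2$ positive roots are $e_i-e_j$ with $i<j$, with associated reflecting hyperplanes $H_{ij}=\{x_i=x_j\}\cap V$. A natural candidate is the conformally flat metric $g=e^{2\phi}g_0$, where $\phi\colon V\to\mathbb{R}$ is smooth, compactly supported, and $W$-invariant (one can always produce such a $\phi$ by averaging a bump function over the $W$-action). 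By construction each reflection $s_\alpha$ is an isometry of $g$, and the corresponding hyperplane $H_\alpha$ is totally geodesic as the fixed set of an isometric involution.

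The easy verifications are that $g$ is smooth, equal to $g_0$ outside a compact set, and non-flat for generic $\phi$ (a direct curvature computation shows the scalar curvature does not vanish identically). The substantive step is to show that every line $L$ in each positive root direction $\alpha$ is invisible. Fix such an $L$, let $p=L\cap H_\alpha$, and let $\gamma$ be the $g$-geodesic with $\gamma(t)=L(t)$ for $t\ll 0$. Because $s_\alpha$ is an isometry and geodesics are time-reversible, the curve $\tilde\gamma(t):=s_\alpha\gamma(-t)$ is again a $g$-geodesic. Using $s_\alpha p=p$ and $s_\alpha\alpha=-\alpha$, an easy calculation gives $\tilde\gamma(t)=L(t)$ for $t\gg 0$. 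Invisibility of $L$ is therefore reduced to the identity $\gamma\equiv\tilde\gamma$, since then $\gamma(t)=L(t)$ also for $t\gg 0$.

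The hard part is justifying $\gamma\equiv\tilde\gamma$. This identity would follow by uniqueness of geodesics if one could produce a single time $t_*$ at which $\gamma(t_*)\in H_\alpha$ and $\gamma'(t_*)$ is parallel to $\alpha$: at such a point $\gamma$ and $\tilde\gamma$ share position and velocity (up to a time shift). Continuity together with the totally-geodesic nature of $H_\alpha$ forces $\gamma$ to cross $H_\alpha$, but bare $W$-invariance does not obviously force perpendicular crossing. To close this gap I would refine the choice of $\phi$ to vanish to sufficiently high order on every reflection hyperplane — for instance $\phi(x)=\chi(|x|^2)\prod_{i<j}(x_i-x_j)^{2}$ with $\chi$ a smooth radial cut-off — so that $\nabla\phi$ vanishes on each $H_{ij}$. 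With such $\phi$ one aims to show that $\nabla\phi$ is parallel to $\alpha$ all along each line in direction $\alpha$, upgrading $L$ to an honest pre-geodesic of $g$; uniqueness of geodesics then immediately yields $\gamma\equiv L$ and invisibility is automatic. If this purely conformal ansatz turns out to be too rigid, the fallback is a non-conformal $W$-invariant perturbation of the form $g=g_0+\sum_{\alpha\in\Phi^+}c_\alpha\, d\alpha\otimes d\alpha$, where the functions $c_\alpha$ are tuned so that $\Gamma^k_{ij}\alpha^i\alpha^j$ is parallel to $\alpha$ for each positive root simultaneously, which is precisely the condition that lines in each root direction be geodesics.
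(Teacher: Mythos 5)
You have correctly isolated the crux of the symmetry argument: once a reflection $s_\alpha$ is an isometry, invisibility in direction $\alpha$ reduces to forcing the geodesic to cross the mirror hyperplane orthogonally, and your reduction to $\gamma\equiv\tilde\gamma$ is exactly the lemma the paper uses. But the way you propose to close that gap does not work. In the conformal ansatz, asking that $\nabla\phi$ be parallel to $\alpha$ along \emph{every} line in direction $\alpha$ means $\nabla\phi(x)\in\mathbf{R}\alpha$ at \emph{every} point $x$; imposing this for two non-proportional roots forces $\nabla\phi\equiv 0$, hence $\phi\equiv 0$ by compact support. (The paper itself remarks that the conformally flat version of its construction yields only \emph{one} invisible direction, so a single conformal factor cannot carry $n(n+1)/2$ of them.) Your fallback $g=g_0+\sum_\alpha c_\alpha\,d\alpha\otimes d\alpha$ replaces this by the requirement that all lines in all $N$ root directions be pre-geodesics, which is $(n-1)$ pointwise conditions per root on $N$ unknown functions --- overdetermined already for $n\ge 3$ --- and you give no argument that the tuning is possible, nor that a nontrivial (non-flat, compactly supported) solution exists. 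More fundamentally, ``lines in direction $\alpha$ are geodesics'' is a much stronger demand than invisibility: the geodesic is allowed to bend inside the obstacle, as long as it exits on the same line.

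The idea you are missing is how to get a geodesic to exit \emph{parallel} to its entry direction without keeping it straight inside. The paper does this with generating functions: for each positive root $v_i$ it takes the exact Lagrangian section $L_i=\{p=v_i+\epsilon\nabla\phi_i\}$ of $T^*\mathbf{R}^n$, with $\phi_i$ compactly supported, and then \emph{solves for the metric}: the conditions $L_i\subset\{h=1\}$ form a square linear system, $N=n(n+1)/2$ equations in the $N$ unknown coefficients $h_{ij}(x)$ of the co-metric, which at $\epsilon=0$ reduces to ``all roots of $A_n$ have length $\sqrt2$'' and hence has the identity as its unique solution; for small $\epsilon$ one gets a unique positive-definite solution, Euclidean outside the support ball. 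An exact Lagrangian section contained in an energy level is invariant under the Hamiltonian flow, so every geodesic entering with momentum $v_i$ exits with momentum $v_i$, i.e.\ on a parallel line. Placing the support ball inside a Weyl chamber and pushing the metric around by $W$ then makes each $s_{v_k}$ an isometry, the parallel exit forces orthogonal crossing of $P_{v_k}$, and your symmetry argument upgrades ``parallel'' to ``identical.'' Non-flatness is then checked not by a curvature computation but by deriving, from the assumption of flatness, linear constraints of the form $(\nabla(\phi_{kl}-\phi_k+\phi_l),v_k+v_l)=0$ on the data, and choosing the $\phi$'s to violate them. The numerical coincidence that makes all of this work --- the number of positive roots of $A_n$ equals the number of independent metric components --- is absent from your proposal and is the heart of the proof.
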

The idea is that $N$ is the number of components of the metric
tensor and also is the number of positive roots of the root system
$A_n$. It is not clear if other reflection groups can be used in a similar manner.

\begin{remark}A similar problem for conformaly flat metrics can be posed also.
 Our construction in this case gives only one invisible direction. Analogously,
 one can construct  metrics in the diagonal form with $n$ invisible directions.
 \end{remark}

\begin{remark} It is worth mentioning  that there exist smooth Finsler non-flat metrics
which are compactly supported and
have all the directions invisible. This is a very well known observation related to Hopf rigidity. These metrics can be constructed by the action of small compactly supported symplectic diffeomorhism of $T^*\mathbf{R}^n$ on the Lagrangian foliation corresponding to Minkowskii metric.
\end{remark}

\begin{remark}One can use this construction on other
manifolds also. The most natural is to implant such a metric into a
small ball of flat torus.
Then the resulting geodesic flow has $N=n(n+1)/2$ invariant Lagrangian torii.
It would be interesting to understand
geometry and dynamics of these examples further.
\end{remark}

\section{Using generating functions}
In this section we use generating functions to create Lagrangian
submanifolds in the energy level of the Riemannian metric in
question.

Recall that the root system $A_n$ can be realized as a set of the
integer vectors  $e_i-e_j$ (of the length $\sqrt 2$) in
$\mathbf{R}^{n+1}$, where $\mathbf{R}^n$ is viewed as hyperplane of
$\mathbf{R}^{n+1}$ defined as $\{x_1+...+x_{n+1}=0\}$.

For our purposes it
will be convenient to arrange the roots in the following order. Let
$v_1,..,v_N$ are such that first $n$ are defined by
$v_i=e_i-e_{n+1}$ and the rest are $e_i-e_j$ for $1\leq i<j\leq n$.
So there are $N=n(n+1)/2$ of them and together with their negatives
they form all the roots. Notice that $(v_1,..,v_n)$ form a basis of
$\mathbf{R}^n$ and the rest are are their differences $v_i-v_j,\
1\leq i<j\leq n$.

For every $i=1,..,N$ consider the Lagrangian sections $L_i$ of
$T^*\mathbf{R}^n$ equipped with the standard symplectic structure
which are defined by the generating function
\begin{equation}
\label{Lagrangian} S_i(x)=(v_i,x)+\epsilon\phi_i(x),\quad
L_i=\{p=\nabla S_i=v_i+\epsilon\nabla\phi_i\},
 \end{equation} where $\phi_i$ are any smooth
functions on $\mathbf{R}^n$ with the support in a ball $B$. Here and
later we denote by  $(,)$ the standard scalar product. It is a well
known fact that any root system determines the scalar product
uniquely. Therefore we have the following

\begin {theorem}
\label{Main}
If $\epsilon>0$ is small enough then  there exists and unique
Riemannian metric $g$ on $\mathbf{R}^n$ such that all $L_i$ lie in
the level $\{h=1\}$ of the corresponding Hamiltonian  function $h$.
Moreover this metric is standard Euclidean outside $B$.
\end{theorem}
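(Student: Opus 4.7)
The plan is to reduce the claim to a pointwise linear-algebra statement for the entries of $g^{-1}(x)$, solve it explicitly at $\epsilon=0$ using the structure of $A_n$, and then perturb by continuity. With the cogeodesic Hamiltonian $h(x,p)=\tfrac{1}{2} g^{jk}(x)p_jp_k$, the requirement $h\equiv 1$ on $L_i=\{p=w_i(x)\}$, where $w_i(x):=v_i+\epsilon\nabla\phi_i(x)$, reads
\begin{equation*}
\sum_{j,k=1}^{n} g^{jk}(x)\, w_i^j(x)\, w_i^k(x) \;=\; 2,\qquad i=1,\ldots,N,
\end{equation*}
which at each fixed $x$ is a system of $N$ linear equations in the $N=n(n+1)/2$ independent entries of the symmetric matrix $g^{jk}(x)$; the coefficients depend smoothly on $x$.

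At $\epsilon=0$ we have $w_i=v_i$, and the system becomes $B(v_i,v_i)=2$ for every positive root, where $B$ denotes the bilinear form represented by $g^{jk}$. I would verify the invertibility of the coefficient matrix directly from the labelling chosen in the excerpt: the $n$ equations coming from $v_i=e_i-e_{n+1}$ fix the diagonal entries $B(v_i,v_i)=2$, and the remaining $\binom{n}{2}$ equations $B(v_i-v_j,v_i-v_j)=2$ then force $B(v_i,v_j)=1$ for $1\le i<j\le n$. These $N$ values determine $B$ uniquely and reproduce the restriction of the Euclidean form, so the $N\times N$ coefficient matrix of the system at $\epsilon=0$ is nonsingular and its unique solution is $g^{jk}=\delta^{jk}$.

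For $\epsilon>0$ small the invertibility persists. Outside the ball $B$ the $\phi_i$ vanish and $w_i\equiv v_i$, so the local system is already Euclidean with unique solution $\delta^{jk}$; on $\overline{B}$ the coefficient matrix depends continuously on $(x,\epsilon)$ and is invertible at $\epsilon=0$ uniformly in $x$, so by compactness there exists $\epsilon_0>0$ such that for $|\epsilon|<\epsilon_0$ it is invertible at every point. Cramer's rule then produces a unique smooth solution $g^{jk}(x)$ that equals $\delta^{jk}$ outside $B$ and is $C^{0}$-close to $\delta^{jk}$ globally, hence positive definite; inverting gives the required Riemannian metric.

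The substantive step is the invertibility at $\epsilon=0$, which is precisely where the structure of the root system $A_n$ is used; the extension to small $\epsilon$, the smoothness of $g$, and its Euclidean behaviour outside $B$ are all routine consequences of Cramer's rule and continuity.
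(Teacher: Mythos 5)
Your proposal is correct and follows essentially the same route as the paper: reduce the condition $L_i\subset\{h=1\}$ to a pointwise linear system of $N$ equations in the $N$ entries of the symmetric matrix $H=G^{-1}$, observe that at $\epsilon=0$ the unique solution is the Euclidean form (the paper invokes the fact that a root system determines the scalar product uniquely, where you verify the nonsingularity by hand from $B(v_i,v_i)=2$ and $B(v_i-v_j,v_i-v_j)=2$), and conclude by continuity for small $\epsilon$. The only additions are your explicit compactness argument for uniformity in $x$ and the (harmless) identification of the Euclidean form with $\delta^{jk}$ in orthonormal coordinates on the hyperplane.
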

\begin{proof}
Let the Riemannian metric and the Hamiltonian function are given by the
matrices $G$ and $H$, $G=H^{-1}$:
 $$g=\sum_{i,j=1}^n g_{ij}
dx_idx_j=(Gdx,dx)\  ; \ \\h=\frac{1}{2} \sum_{i,j=1}^n
h_{ij}p_ip_j=\frac{1}{2}(Hp,p).$$

For a given choice of the functions $\phi_i, i=1,..,N$, the
requirements $L_i\in\{h=1\}$ form a linear system of $N$ inhomogeneous
equations on the $N$ unknown coefficients $h_{ij}$. For
$\epsilon=0$ this system reads simply that the vectors $v_i$ have
all length $\sqrt 2$, which has unique solution namely the standard
Euclidean metric, i.e $G=H=Id$. Therefore the determinant of the
system is not zero and then for $\epsilon $ small enough there is a unique
solution also which is a positive definite form.
\end{proof}
\begin{remark}
In principle one could find the solution $h_{ij}$ of this linear
system explicitly in terms of derivatives of the functions $\phi_i$,
thus determining the metric coefficients (see also Section 4).
\end{remark}
Since by Theorem \ref{Main} all Lagrangian submanifolds $L_i$ lie in an energy level of $h$, then it follows that $L_i$ are invariant under the geodesic flow. Thus every geodesic straight line of
the metric $g$ which enters the ball $B$ in one of the directions
$v_i,i=1,..,N$ leaves the ball by a parallel straight line. Next we can use a
simple symmetry idea which makes this line to be identical with the
initial one.
\section{Symmetrizing the metric}
The symmetrization procedure is based on the folloing obvious
\begin{lemma}Suppose a Riemannian metric $g$ is invariant under
the reflection $s_v$, which is the reflection with respect to the hyperplane $P_v$ orthogonal to
$v$ in $\mathbf{R}^n$. Then any geodesic which crosses $P_v$
orthogonally is symmetric with respect to $P_v$.
\end{lemma}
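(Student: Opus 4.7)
The plan is to exploit the fact that an isometry maps geodesics to geodesics, together with the uniqueness of geodesics determined by their initial position and velocity.

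First, let $\gamma:(-\delta,\delta)\to\mathbf{R}^n$ be a geodesic of $g$ that meets $P_v$ orthogonally at $\gamma(0)=p_0\in P_v$, so $\gamma'(0)$ is parallel to $v$. Define the curve $\tilde\gamma(t):=s_v(\gamma(-t))$. Because $s_v$ is assumed to be an isometry of $g$, it sends geodesics to geodesics (and the reversal of parameter is harmless since the geodesic equation is second order), so $\tilde\gamma$ is again a geodesic of $g$.

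Next I would check the initial data. Since $p_0\in P_v$ is fixed by $s_v$, we have $\tilde\gamma(0)=s_v(p_0)=p_0=\gamma(0)$. For the velocity, $\tilde\gamma'(0)=-ds_v|_{p_0}(\gamma'(0))$; but $\gamma'(0)$ is orthogonal to $P_v$, hence an eigenvector of $ds_v|_{p_0}$ with eigenvalue $-1$, so $\tilde\gamma'(0)=-(-\gamma'(0))=\gamma'(0)$. By the uniqueness theorem for geodesics with prescribed initial position and velocity, $\tilde\gamma\equiv\gamma$, i.e.
\[
s_v(\gamma(-t))=\gamma(t)\qquad\text{for all }t\text{ in the domain of }\gamma,
\]
which is exactly the assertion that $\gamma$ is symmetric with respect to $P_v$.

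There is no real obstacle here; the only thing to be slightly careful about is that $s_v$ is a global isometry of $g$ (given by hypothesis) and that the orthogonality assumption translates into $\gamma'(0)$ being a $(-1)$-eigenvector of the differential of $s_v$, which both fall out immediately from the definitions.
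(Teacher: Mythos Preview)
Your argument is correct and is precisely the standard one: use that the isometry $s_v$ (composed with time reversal) carries geodesics to geodesics, match initial position and velocity at the crossing point, and invoke uniqueness. The paper itself gives no proof, labeling the lemma ``obvious,'' so your write-up simply supplies the expected details.
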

As a corollary we have the following. Take the metric $g$
constructed in a previous section, where the ball $B$ lies in one
halfspace with respect to $P_v$, and reflect the metric to the other
halfspace of $P_v$. By the lemma one gets a new metric supported on
$B\cup s_v (B)$ with the property that the direction $v$ is not
visible.

 Using this observation we proceed as follows. Consider the Weyl group $W$
 of the root system $A_n$ generated by the reflections $s_{v_i},\
 i=1,..,N$. Consider an arbitrary point $P_1$ lying inside the Weyl chamber $C$ together with
 a sufficiently small ball $B_1$ centered in $P_1$.

 Use the construction (\ref {Lagrangian}) to define Riemannian metric $g_1$ on it. The Weyl group $W$ acts on the chambers simply transitively.
 We define the points $P_i$
 and the balls $B_i$ together with the Riemannian metric on $B_i$ pushed forward from the initial one. Here $i$ ranges from $1$ to $|W|=(n+1)!$.

 I claim that so constructed metric $g$ on $\mathbf{R}^n$ is invisible in the directions
 of every root $v_k$ of $A_n$. Indeed, by the construction every reflection $s_{v_k}$ is an isometry
 of the constructed metric $g$. Moreover by formula (\ref {Lagrangian}) any geodesic straight line
 passing every ball $B_i$ in the direction $v_k$ remains a parallel straight line and so crosses $P_{v_k}$
 orthogonally. Therefore by the lemma the whole geodesic is symmetric and so the direction ${v_k}$
 is invisible.

 Moreover if the radius of the initial ball $B_1$ was chosen sufficiently small
 then every geodesic in the direction $v_k$ crosses in fact only two of the balls or non,
 where these two are symmetric with respect to reflection $s_{v_k}$. Indeed,
 let us arrange all the balls into symmetric pairs with respect to $P_{v_k}$. Since the Weyl group
 acts by orthogonal transformations on $\mathbf{R}^n$, so no three centers $P_i$ of the balls
 can lie on a straight line. Then it is obvious that if the radii of the balls are small enough, then the convex hulls of these pairs are all disjoint.

 \section{Checking non-flatness}
 In this section we check that the the constructed Riemannian
 metrics are in fact non-flat. Of course it is imposable to compute
 curvatures in finite time. Therefore we proceed by a different argument.
 Suppose on the contrary that the metric $g$ is flat. In such a case
 it must be isometric to Euclidean (see for example \cite {Croke2})
 and therefore scalar product with respect to $g$ of the geodesic fields
 given by the sections $L_i$ and $L_j$ must be constant on
 $\mathbf{R}^n$ for all $i,j=1,..,N$. In particular, we can write
 the following identities.
\begin{align*}
(H(v_k+\epsilon\nabla\phi_k),\ v_k+\epsilon\nabla\phi_k)=2;\\
(H(v_k-v_l+\epsilon\nabla\phi_{kl}),\ v_k+\epsilon\nabla\phi_k)=const;\\
(H(v_k-v_l+\epsilon\nabla\phi_{kl}),\ v_l+\epsilon\nabla\phi_l)=const;\\
(H(v_l+\epsilon\nabla\phi_l),\ v_l+\epsilon\nabla\phi_l)=2.
\end{align*}
Here $1\leq k<l\leq n$ and the function $\phi_{kl}$ corresponds to
the root $v_k-v_l$ in the formula (\ref{Lagrangian}). Let $H=Id
+\epsilon H_1+...$ and extract in these equations terms of order
$\epsilon$. We have
\begin{align}
 (H_1(v_k),v_k)+2(v_k, \nabla\phi_k)=0;\\
(H_1(v_k-v_l),v_k)+(\nabla\phi_{kl},\ v_k)+(\nabla\phi_k,v_k-v_l)=const;\\
(H_1(v_k-v_l),v_l)+(\nabla\phi_{kl},\ v_l)+(\nabla\phi_l,v_k-v_l)=const;\\
(H_1(v_l),v_l)+2(v_l, \nabla\phi_l)=0.
\end{align}
Subtract (2) from (3) and also add (4) and (5):
\begin{align}
-(H_1(v_l),v_k)+(\nabla\phi_{kl},\
v_k)-(\nabla\phi_k,v_k+v_l)=const;\\
(H_1(v_k),v_l)+(\nabla\phi_{kl},\ v_l)+(\nabla\phi_l,v_k+v_l)=const
\end{align}
Since $H_1$ is symmetric matrix we can add the last two equations to
get.
\begin{equation}
(\nabla(\phi_{kl}-(\phi_k-\phi_l)),v_k+v_l)=const.
\end{equation}
Outside the support $B$ the LHS of (8) is obviously zero, so
\begin{equation}
(\nabla(\phi_{kl}-(\phi_k-\phi_l)),v_k+v_l)=0.
\end{equation}
But these are strong restrictions on the functions $\phi_i$'s of the
construction. Thus if we choose functions $\phi_k, \phi_l, \phi_{kl}$ in (\ref{Lagrangian}) violating at least one of these identities then the corresponding metric is not flat.

\section*{Acknowledgements}
It is a pleasure to thank S. Tabachnikov
and A. Plakhov for useful discussions during the programm "Topology in Dynamics and Physics"
in Tel Aviv University.

\end{document}